    \renewcommand{\theequation}{{\thesection}.\@arabic\c@equation} 
\newtheorem{tw}{Theorem}
\newtheorem{lm}{Lemma}
\theoremstyle{definition}
    \renewcommand{\thelm}{{\thesection}.\@arabic\c@lm} 
    \renewcommand{\thetw}{{\thesection}.\@arabic\c@tw} 
\DeclareMathOperator*{\esssup}{ess\,sup}
\begin{document}

\title{A continuum individual based model of fragmentation: dynamics of correlation functions}
\author{Agnieszka Tanaś \footnote{Instytut Matematyki, Uniwersytet Marii Curie-Skłodowskiej, 20-031 Lublin, Poland, e-mail address: agnieszka.puchacz@interia.eu}}
\date{\today}
\maketitle

\begin{abstract}

An individual-based model  of an infinite system of point particles
in $\mathbb{R}^d$ is proposed and studied. In this model, each
particle at random produces a finite number of new particles and disappears afterwards. 
The phase space for this model is the set $\Gamma$ of all locally finite
subsets of $\mathbb{R}^d$. The system's states are probability
measures on  $\Gamma$ the Markov evolution of which is described in
terms of their  correlation functions in a scale of Banach spaces.
The existence and uniqueness of solutions of the corresponding
evolution equation are proved.

\textbf{Keywords:} configuration space, individual-based model,
birth-and-death process, correlation function, scale of Banach
spaces, Ovcyannikov method.

\textbf{2010 Mathematics Subject Classification:} 60J80, 82C21, 92D25.

\end{abstract}

\section{Introduction}

Mathematical models describing large ecological communities mostly
operate with averaged quantities like the density of the entities
constituting the community and are deduced in a phenomenological
way, see, e.g. \cite{mohamed, leb, chin}. In a more advanced modeling, the 
dynamical acts of each single entity are being taken into
account. Among such individual-based models one might distinguish
those where the entities disappear (die) or give birth to new ones,
see, e.g. \cite{prz2,prz1, garcia}.  In the present paper, we introduce and study an
individual-based model of an infinite system of point `particles'
placed in $\mathbb{R}^d$, in which each `particle' produces at
random  a finite `cloud' (possibly empty) of new ones, and
disappears afterwards. A particular case of this model with the
cloud being empty or consisting of exactly two offsprings can
describe the dynamics of cell division. As is now commonly adopted, see \cite{neuhauser},
the phase space of such systems is the configuration space $\Gamma =
\Gamma (\mathbb{R}^d)$ which consists of all locally finite subsets
of $\mathbb{R}^d$, called configurations. This set is endowed with a
measurability structure that allows one to employ probability
measures defined thereon. Such measures are then considered as the
system's states the Markov evolution of which is described by means of the
corresponding Fokker-Planck equation. Its dual is the Kolmogorov
equation involving observables -- appropriate functions $F:\Gamma\to
\mathbb{R}$. Details of the analysis on configuration spaces can be
found in \cite{alb,har,har2}, see also \cite{fin,prz2,fin2,prz1} for
more on individual-based modeling of continuum infinite-particle
systems. In studying the model proposed in this work we follow the
so called statistical approach in which the evolution of states is
described as the evolution of the corresponding correlation
functions. It is obtained by solving the evolution equation deduced
from the Fokker-Planck equation by means of a certain procedure, see
\cite{bog3}. In this paper, we prove the existence and uniqueness of
the classical solutions of this evolution equation. It has been done
by means of an Ovcyannikov-type method, see \cite{fin,ovc}, in a
scale of Banach spaces of correlation functions. As typical for this method,
the solution is shown to exist only on bounded time interval.

\section{Preliminaries}

The configuration space $\Gamma$ over $\mathbb{R}^d$ is defined as
$$\Gamma = \{ \gamma \subset \mathbb{R}^d: |\gamma \cap K|< \infty \hbox{ for any compact } K \subset \mathbb{R}^d\},$$
where $|\cdot|$ stands for cardinality. It is equipped with the
weakest topology for which the mappings
$$\Gamma \ni \gamma \mapsto  \sum_{x \in \gamma}f(x),$$
are continuous for all continuous compactly supported functions $f:
\mathbb{R}^d \to \mathbb{R}$. This topology can be metrized in the
way that makes $\Gamma$ a Polish space. We denote by
$\mathcal{B}(\Gamma)$ the corresponding Borel $\sigma$-field on
$\Gamma$. The system's states are probability measures on $(\Gamma,
\mathcal{B}(\Gamma))$ the set of which is denoted by
$\mathcal{P}(\Gamma)$. Note that the points of $\Gamma$ can be
associated with elements of $\mathcal{P}(\Gamma)$ by assigning the
corresponding Dirac measures $\gamma \mapsto \delta_\gamma \in
\mathcal{P}(\Gamma)$. Such elements of $\Gamma$ are called point
states. The evolution of the states of a given system is described
by the Fokker-Planck equation
\begin{equation}
\frac{d}{dt}\mu_t=L^{*}\mu_t \hbox{, } \quad \mu_t|_{t=0}=\mu_0
\hbox{, } \quad t>0, \label{mi}
\end{equation}
in which the `operator' $L^*$ contains the whole information about
the system. Along with states $\mu\in \mathcal{P}(\Gamma)$ one can
also consider suitable functions $F: \Gamma\to \mathbb{R}$, called
\textit{observables}. Then the number
\[
\int_\Gamma F  d \mu
\]
is the value of $F$ in  state $\mu$. In particular, $F(\gamma)$ is
the value of $F$ in the point state $\gamma$. The evolutions of
states and observables are related to each other by the duality
\begin{equation}
\int _{\Gamma}F_0 d\mu_t = \int_{\Gamma}F_td\mu_0 \hbox{, }\quad t>0.
\label{duality}
\end{equation}
Hence, the system's evolution can also be considered as the
evolution of observables, obtained from the Kolmogorov equation
\begin{equation}
\frac{d}{dt}F_t=LF_t \hbox{, }\quad F_t|_{t=0}=F_0 \hbox{,   }\quad
t>0, \label{obs}
\end{equation}
dual in the sense of (\ref{duality}) to that in (\ref{mi}).  For the most of such models, also for
that introduced and studied in this work, the direct solving of (\ref{mi}) is
possible only for finite systems, i.e., in the case where the states
are supported on the subset of $\Gamma$ consisting of finite
configurations only, see, e.g. \cite{kozicki}. As we are going to describe infinite systems,
we will follow another approach based on the use of correlation
functions.

The space of finite configurations mentioned above can be given by writing 
it as the topological sum
\[
\Gamma_0 := \bigsqcup_{n=0}^{\infty} \Gamma^{(n)},
\]
where
$$\Gamma^{(0)}=\{\emptyset\}\hbox{, } \quad \Gamma ^{(n)}:= \{ \eta \subset \mathbb{R}^d: | \eta|=n\} \hbox{, } \qquad n \in \mathbb{N}.$$
Here each $\Gamma ^{(n)}$ is equipped with the topology related to
the Euclidean topology of the underlying space $\mathbb{R}^d$. One
can show that $\Gamma_0 \in \mathcal{B}(\Gamma)$ and that the
corresponding Borel $\sigma$-field of subsets of $\Gamma_0$
coincides with the $\sigma$-field
\[
\mathcal{B}(\Gamma_0) = \{ A \cap \Gamma_0: A \in
\mathcal{B}(\Gamma)\}.
\]
Furthermore, a function $G:\Gamma_0 \to \mathbb{R}$ is
$\mathcal{B}(\Gamma_0)$-measurable if and only if there exists a
family $\{G^{(n)}\}_{n\in \mathbb{N}_0}$ such that: (a) $G^{(0)}$ is
just a real number; (b) $G^{(1)}:\mathbb{R}^d \to \mathbb{R}$ is a
Borel function; (c) for each integer $n\geq 2$, $G^{(n)}:
(\mathbb{R}^d)^n \to \mathbb{R}$ is a symmetric Borel function; (d)
$G^{(0)} = G(\emptyset)$ and for each $n\in \mathbb{N}$, the
following holds
\[
G^{(n)}(x_1 , \dots , x_n) = G(\{x_1 , \dots , x_n\}).
\]
Note that, for $G$ as above,  the family of Borel functions
$\{G^{(n)}\}_{n\in \mathbb{N}_0}$ is not unique. Let $B_{\rm loc}
(\Gamma_0)$ stand for the set of all functions $G:\Gamma_0 \to
\mathbb{R}$ for each of which there exists the family as mentioned
above with the following properties: (a) each $G^{(n)}$, $n\in
\mathbb{N}$, is continuous and compactly supported; (b) there exists
$N\in \mathbb{N}_0$ such that $G^{(n)}\equiv 0$ for all $n\geq N$.
The Lebesgue-Poisson measure $\lambda$ on $\Gamma_0$ is defined by
the following integrals
\begin{eqnarray}
  \label{LPm}
\int_{\Gamma_0} G(\eta ) \lambda(d \eta) = G^{(0)} +
\sum_{n=1}^\infty \frac{1}{n!} \int_{(\mathbb{R}^d)^n} G^{(n)} (x_1
, \dots , x_n) d x_1 \cdots d x_n,
\end{eqnarray}
where $G$ runs through $B_{\rm loc} (\Gamma_0)$.

As mentioned above, the problem (\ref{mi}) will be solved in terms
of correlation functions. To introduce them we use Bogoliubov
functionals, see \cite{bog}. Let $\Theta$ be the set of all
continuous compactly supported functions $\theta :\mathbb{R}^d \to
(-1, 0]$. For a given $\mu \in \mathcal{P}(\Gamma)$, the Bogoliubov
functional is
\begin{equation}
\label{BF}
 B_{\mu}(\theta) = \int_{\Gamma} \prod_{x\in \gamma}\left(1+\theta(x) \right)\mu (d\gamma) \hbox{, }\quad \theta \in \Theta.
\end{equation}
The integral in (\ref{BF}) makes sense for each $\theta \in \Theta$
as the map $\gamma \mapsto \prod_{x\in \gamma}\left(1+\theta(x)
\right)$ is measurable and bounded. The key idea of the approach
which we follow in this work is to restrict the choice of $\mu_0$ in (\ref{mi}) to
the subset of $\mathcal{P}(\Gamma)$ consisting of all the states
$\mu$ with the property:  $B_\mu$ can be continued to a function of
$\theta \in L^1 (\mathbb{R}^d)$ analytic in some neighborhood of the
point $\theta=0$. This exactly means, see \cite{bog3,bog2}, that
(\ref{BF}) can be written down in the following form
\begin{eqnarray}
  \label{BF1}
B_\mu(\theta) & = & \int_{\Gamma_0} \bigg{(}k_\mu(\eta) \prod_{x\in
\eta}\theta
(x)\bigg{)} \lambda ( d \eta) \\[.2cm]
&=& 1 +\sum_{n=1}^\infty\frac{1}{n!} \int_{(\mathbb{R}^d)^n}
k^{(n)}_\mu (x_1, \dots, k_n) \prod_{i=1}^n \theta(x_i) d x_1 \cdots
d x_n, \nonumber
\end{eqnarray}
where $\lambda$ is as in (\ref{LPm}) and $k_\mu$ (resp. $k_\mu^{(n)}$) is the correlation function
(resp. $n$-th order correlation function) of the state $\mu$ such
that $k^{(n)}_\mu \in L^\infty ((\mathbb{R}^d)^n)$ for all $n\in
\mathbb{N}$. Note that in this case $k_\mu$ and $\mu$ are related to
each other by
\begin{equation}
\int_{\Gamma} \left( \sum_{\eta \subset \gamma}G(\eta)\right) \mu
(d\gamma)= \int_{\Gamma_0} G(\eta)k_{\mu}(\eta)\lambda (d\eta),
\label{dual}
\end{equation}
holding for all  $G\in B_{\rm loc}(\Gamma_0)$. By means of
(\ref{BF1}) one can transfer the action of $L$ as in (\ref{obs})
from $F_\theta(\gamma):= \prod_{x\in \gamma}(1+ \theta (x))$ to
$k_\mu$ according to  the following rule, cf. (\ref{duality}) and (\ref{dual}),
\begin{equation}
  \label{bogo}
\int_{\Gamma}( L F_\theta)(\gamma) \mu( d \gamma) = \int_{\Gamma_0}
(L^\Delta k_\mu)(\eta) \prod_{x\in \eta} \theta(x) \lambda (d\eta).
\end{equation}
This leads one from the Kolmogorov equation (\ref{obs}) to the
problem
\begin{equation}
\frac{d}{dt}k_t=L^{\Delta}k_t \hbox{, }\quad k_t|_{t=0}=k_{\mu_0},
\label{rownanielt}
\end{equation}
which we study in the next section. Among other methods we use the Minlos lemma in the following form
\begin{lm}[Lemma 2.4 \cite{bog3}]
For Lebesgue-Poisson measure $\lambda$ defined in (\ref{LPm}) and for any measurable function $M: \Gamma_{0} \times \Gamma_{0} \times
\Gamma_{0} \to \mathbb{R}_{+}$ the following holds
$$\int_{\Gamma_0} \left( \sum_{\zeta \subset \eta}M(\zeta, \eta, \eta \setminus \zeta)\right)
\lambda(d\eta) = \int_{\Gamma_0} \int_{\Gamma_0}M(\zeta, \eta \cup \zeta, \eta)\lambda(d\zeta) \lambda(d\eta)$$
if both sides are finite. \label{minlos}
\end{lm}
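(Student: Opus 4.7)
The plan is to expand both sides using the explicit series representation (\ref{LPm}) of the Lebesgue-Poisson measure and match them term by term. Because $M\geq 0$, Tonelli's theorem will justify every interchange of summation and integration, with the finiteness hypothesis merely ensuring that the common value is a finite element of $\mathbb{R}_{+}$ rather than $+\infty$.

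First, I would apply (\ref{LPm}) to the function $\eta\mapsto\sum_{\zeta\subset\eta}M(\zeta,\eta,\eta\setminus\zeta)$. For $\eta=\{x_{1},\dots,x_{n}\}\in\Gamma^{(n)}$, I decompose the inner sum by the cardinality $k=|\zeta|$:
$$\sum_{\zeta\subset\eta}M(\zeta,\eta,\eta\setminus\zeta)=\sum_{k=0}^{n}\sum_{\substack{I\subset\{1,\dots,n\}\\ |I|=k}}M\bigl(\{x_{i}\}_{i\in I},\eta,\{x_{j}\}_{j\notin I}\bigr).$$
When the symmetric Borel representative $M^{(k,n,n-k)}$ of $M$ is integrated against $dx_{1}\cdots dx_{n}$, each of the $\binom{n}{k}$ subsets of fixed size $k$ produces the same value, so the $k$-layer collapses to $\binom{n}{k}$ times the representative $I=\{1,\dots,k\}$. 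Combining $1/n!$ from (\ref{LPm}) with $\binom{n}{k}$ yields the prefactor $1/(k!(n-k)!)$.

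The left-hand side thus becomes the double series
$$\sum_{n=0}^{\infty}\sum_{k=0}^{n}\frac{1}{k!(n-k)!}\int_{(\mathbb{R}^{d})^{n}}M\bigl(\{x_{1},\dots,x_{k}\},\{x_{1},\dots,x_{n}\},\{x_{k+1},\dots,x_{n}\}\bigr)dx_{1}\cdots dx_{n}.$$
The substitution $m=n-k$, followed by interchange of summation (valid by Tonelli because of the sign hypothesis on $M$), transforms this into $\sum_{k,m\geq 0}\frac{1}{k!\, m!}\int(\cdots)\, dx_{1}\cdots dx_{k+m}$. Reading $x_{1},\dots,x_{k}$ as the coordinates of $\zeta$ and $x_{k+1},\dots,x_{k+m}$ as those of $\eta$, this is precisely what (\ref{LPm}) yields when applied first in the $\zeta$-variable and then in the $\eta$-variable to the iterated integral on the right-hand side.

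The only genuinely delicate point is the symmetry argument underlying the collapse of the $\binom{n}{k}$ terms of size $k$: one must invoke that a Borel function on $\Gamma_{0}$ lifts to a family of \emph{symmetric} Borel functions on $(\mathbb{R}^{d})^{n}$, so that the tagged representatives of $M$ are invariant under independent permutations of the coordinates of each of the three arguments. Once this observation and the positivity of $M$ are in place, no further analytic input is required and the remainder of the argument is combinatorial bookkeeping together with the reindexing described above.
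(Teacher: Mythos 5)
The paper does not prove this lemma at all: it is imported verbatim as Lemma 2.4 of \cite{bog3}, so there is no internal proof to compare against. Your argument — expanding both sides via (\ref{LPm}), stratifying the sum over $\zeta\subset\eta$ by $|\zeta|=k$, collapsing the $\binom{n}{k}$ equal contributions using the symmetry of the Borel representatives, and reindexing $n=k+m$ with Tonelli — is the standard proof of this identity and is correct, including your observation that for nonnegative $M$ the finiteness clause only serves to place the common value in $\mathbb{R}_{+}$ rather than $[0,+\infty]$. The only point I would make explicit, beyond the symmetry issue you already flag, is that on the right-hand side the identification of $\{x_{1},\dots,x_{k}\}\cup\{x_{k+1},\dots,x_{k+m}\}$ with a configuration of cardinality $k+m$ is valid only off the diagonal set where some coordinates coincide, which is Lebesgue-null and hence harmless.
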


\section{The model}
The model which we introduce and study in this work is specified by
the generator, see (\ref{obs}),
\begin{equation}(LF)(\gamma)= \sum_{x\in \gamma}\int_{\Gamma_0}b(x|\xi)[F(\gamma \setminus x \cup \xi)-F(\gamma)]\lambda(d\xi).
\label{lf}
\end{equation}
In (\ref{lf}), the kernel $b(x|\xi) \geq 0$ describes the following
act: the point $x\in\gamma$ disappears and a finite configuration
(cloud) $\xi\in \Gamma_0$ appears instead. A particular case where
$b(x|\emptyset) = m(x)$, $b(x|\{y_1,y_2\}) = c(x|y_1,y_2)$, and
$b(x|\xi) \equiv 0$ for other $\xi$, is a cell division model, cf.
\cite{mohamed, leb}, in which each cell can die with intrinsic mortality rate
$m(x)$ or split, with rate $c(x|y_1, y_2)$, into two new cells
located at $y_1$ and $y_2$. In this case $L$ takes the form
\begin{align*}
(LF)(\gamma)&= \sum_{x \in \gamma} m(x)[F(\gamma \setminus x) - F(\gamma)]+\\
&\quad+ \int_{\mathbb{R}}\int_{\mathbb{R}}\sum_{x\in
\gamma}c(x|y_1,y_2)[F(\gamma \setminus x\cup y_1 \cup
y_2)-F(\gamma)].
\end{align*}
For
$$c(x|y_1, y_2)= \frac{1}{2}\left( \delta(y_1-x)a^{+}(y_2-x)+ \delta(y_2-x)a^{+}(y_1-x) \right)$$
it turns into the contact model studied in \cite{mar}.

By (\ref{bogo}) and Lemma \ref{minlos} we obtain  from (\ref{lf})
the following
\begin{equation}
(L^{\Delta}k)(\eta)=-E(\eta)k(\eta)+\int_{\mathbb{R}^d}\sum_{\zeta \subset \eta, \zeta \neq \emptyset}\beta(x|\zeta)k(\eta \cup x \setminus \zeta)dx,
\label{ltrD}
\end{equation}
where
$$\beta(x|\zeta):=\int_{\Gamma_0}b(x|\xi \cup \zeta)\lambda(d\xi), \quad E(\eta):=\sum_{x\in \eta}\beta(x|\emptyset).$$
Regarding the kernel $b$ along with the standard measurability we
assume that:
\begin{eqnarray}
  \label{ass}
& & \exists \overline{\beta} >0 \ \ \forall \eta \in \Gamma_0 \
\quad
\sum_{x\in \eta}b(x|\emptyset) \leq |\eta|\overline{\beta}; \\[.2cm]
& & \exists \overline{\varphi} >0 \ \ \forall \eta\in \Gamma_0 \quad
\ \int_{\mathbb{R}^d} \beta(x|\eta) d x =: \varphi (\eta) \leq
\overline{\varphi}. \nonumber
\end{eqnarray}
Now we introduce the Banach spaces where we will solve the problem
(\ref{rownanielt}) with $L^\Delta$ given in (\ref{ltrD}). According
to the assumption as to the Bogoliubov functional having the form
(\ref{BF1}) these are
$$\mathcal{K}_{\alpha} = \{k: \Gamma_0 \to \mathbb{R}: \| k \|_{\alpha}<\infty\}, \qquad \alpha \in
\mathbb{R},$$ where
\begin{equation*}
\|k\|_{\alpha}=\sup_{n\in \mathbb{N}_0}\frac{1}{n!}e^{\alpha n}\|k^{(n)}\|_{L^{\infty}((\mathbb{R}^d)^n)},
\end{equation*}
which can also be written as
\begin{equation}
\|k\|_{\alpha}=\esssup_{\eta\in \Gamma_0}\frac{1}{|\eta|!}e^{\alpha |\eta|}|k(\eta)|.
\end{equation}
Clearly,
\begin{equation}
|k(\eta)|\le |\eta|!e^{-\alpha |\eta|} \|k\|_{\alpha} \hbox{, }\quad \eta \in \Gamma_0.
\label{kwb}
\end{equation}
In fact, we will consider the scale of such spaces
$\{\mathcal{K}_{\alpha}: \alpha \in \mathbb{R} \}$. Naturally,
$\|k\|_{\alpha'} \ge\|k\|_{\alpha''}$ for $\alpha'> \alpha''$;
hence, $\mathcal{K}_{\alpha'}\hookrightarrow
\mathcal{K}_{\alpha''}$, that is, each smaller space is continuously
embedded into each bigger one.

Let us write (\ref{ltrD}) in the form $L^{\Delta}:=A+B$  with
\begin{equation}
(Ak)(\eta) =-E(\eta)k(\eta)
\label{ltDczesci}
\end{equation}
$$(Bk)(\eta)= \int_{\mathbb{R}^d}\sum_{\zeta \subset \eta, \zeta \neq \emptyset}\beta(x|\zeta)k(\eta \cup x \setminus \zeta)dx.$$
To define $L^\Delta$ as a linear operator in a given
$\mathcal{K}_\alpha$ we set
$$\mathcal{D}_{\alpha}(A)=\{k \in \mathcal{K}_{\alpha}: Ak \in \mathcal{K}_{\alpha}\}$$
and define $\mathcal{D}_{\alpha}(B)$ analogously. Then the domain of
$L^{\Delta}$ in $\mathcal{K}_{\alpha}$ is set to be
$$\mathcal{D}_{\alpha}(L^{\Delta})=\mathcal{D}_{\alpha}(A)\cap \mathcal{D}_{\alpha}(B).$$
Let us prove that
\begin{equation}
\forall \alpha' >\alpha \quad \mathcal{K}_{\alpha'} \subset \mathcal{D}_{\alpha}(L^{\Delta}).
\label{zawie}
\end{equation}
By (\ref{ass}) and (\ref{kwb}) we get from (\ref{ltDczesci}) the
following estimates
\begin{equation}
|(Ak)(\eta)| \le |\eta| \overline{\beta}|\eta|!e^{-\alpha'|\eta|}\|k\|_{\alpha'}
\label{wbD}
\end{equation}
\begin{align*}
|(Bk)(\eta)| & \le \int_{\mathbb{R}^d}\sum_{\zeta \subset \eta, \zeta \neq \emptyset}\beta(x|\zeta)|k(\eta \cup x \setminus \zeta)|dx \le \\
& \le \sum_{\zeta \subset \eta, \zeta \neq \emptyset} \|k\|_{\alpha'} \left(|\eta| -|\zeta|+1 \right)!e^{-\alpha'|\eta|}e^{(|\zeta|-1)\alpha'}\left( \int_{\mathbb{R}^d} \beta(x|\zeta)dx \right) \le \\
& \le |\eta| \|k\|_{\alpha'} \overline{\varphi} |\eta|! e^{-\alpha'|\eta|} \left( \sum_{i=1}^{|\eta|} \frac{e^{(i-1)\alpha'}}{i!} \right) \le \\
& \le |\eta| \|k\|_{\alpha'} \overline{\varphi} |\eta|! e^{-\alpha'|\eta|}H(\alpha'),
\end{align*}
where we have also used that
$$\sum_{i=1}^{|\eta|} \frac{e^{(i-1)\alpha'}}{i!}\le \sum_{i=1}^{\infty} \frac{e^{(i-1)\alpha'}}{i!} = \frac{e^{e^{\alpha'}}-1}{e^{\alpha'}} =: H(\alpha').$$
Employing these estimates for calculating $\|Ak\|_{\alpha}$ and
$\|Bk\|_{\alpha}$ we readily obtain (\ref{zawie}).

By a classical solution of the problem (\ref{rownanielt}), in a
given $\mathcal{K}_\alpha$ and on the time interval $[0,T)$, we mean
a continuous map $[0,T) \ni t \mapsto k_t \in \mathcal{D}_\alpha
(L^\Delta)$ which is continuously differentiable in
$\mathcal{K}_\alpha$ on $[0,T)$ and such that both equations in
(\ref{rownanielt}) are satisfied. Our main result is then given in
the following statement.
\begin{tw}
Let $\alpha_0$ and $\alpha_{*}$ be any real numbers and $\alpha_0 >
\alpha_{*}$. Then the problem (\ref{rownanielt}) with $L^\Delta$ as
in (\ref{ltrD}) and (\ref{ass}) for $k_0 \in \mathcal{K}_{\alpha_0}$ has a unique
classical solution $k_t$ in $\mathcal{K}_{\alpha_{*}}$ on the time
interval $[0, T(\alpha_{*}))$, where
$$T(\alpha_{*}) = \frac{\alpha_0 - \alpha_*}{\overline{\varphi}H(\alpha_0)}.$$
\label{istnienieD}
\end{tw}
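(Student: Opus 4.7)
The plan is to split $L^\Delta=A+B$ as in (\ref{ltDczesci}), absorb the nonnegative diagonal part $A$ into a contraction semigroup, and treat $B$ by an Ovcyannikov-type Picard iteration along the scale $\{\mathcal{K}_\alpha\}_{\alpha\in\mathbb{R}}$. Since $E(\eta)\ge 0$ by the first part of (\ref{ass}), $(S(t)k)(\eta):=e^{-tE(\eta)}k(\eta)$ defines a $C_0$-semigroup of contractions on every $\mathcal{K}_\alpha$ whose generator is $A$; this recasts (\ref{rownanielt}) as the mild equation
\[
k_t = S(t)k_0 + \int_0^t S(t-s)\,Bk_s\, ds,
\]
which is the object I will actually work with.

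The core quantitative input is a refinement of the estimate leading to (\ref{zawie}). Combining the pointwise bound on $|(Bk)(\eta)|$ obtained there with $\sup_{n\ge 1} n e^{-(\alpha'-\alpha)n}\le[e(\alpha'-\alpha)]^{-1}$ and the monotonicity $H(\alpha')\le H(\alpha_0)$, I get for all $\alpha<\alpha'\le \alpha_0$ and $k\in \mathcal{K}_{\alpha'}$
\[
\|Bk\|_{\alpha}\le \frac{\overline{\varphi}\, H(\alpha_0)}{e(\alpha'-\alpha)}\,\|k\|_{\alpha'}.
\]
Setting $k_t^{(0)}:=S(t)k_0$ and $k_t^{(n+1)}:=S(t)k_0+\int_0^t S(t-s)Bk_s^{(n)}\,ds$, I estimate the $n$-th Picard difference by the Ovcyannikov device: subdivide $[\alpha_*,\alpha_0]$ into $n$ equal slabs of width $(\alpha_0-\alpha_*)/n$ and apply the scale bound across each slab, with $\|S(t-s)\|\le 1$ contributing no prefactor. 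A straightforward induction then gives
\[
\|k_t^{(n)}-k_t^{(n-1)}\|_{\alpha_*}\le \|k_0\|_{\alpha_0}\,\frac{t^n}{n!}\left(\frac{n\,\overline{\varphi} H(\alpha_0)}{e(\alpha_0-\alpha_*)}\right)^n,
\]
and Stirling's $n^n\le e^n n!$ transforms the right-hand side into (up to a subpolynomial factor) the $n$-th term of a series in $t\overline{\varphi}H(\alpha_0)/(\alpha_0-\alpha_*)$, convergent exactly for $t<T(\alpha_*)$. The Picard limit $k_t\in C([0,T(\alpha_*));\mathcal{K}_{\alpha_*})$ solves the mild equation; running the same construction on each shrunken interval $[\alpha_*+\varepsilon,\alpha_0]$ places $k_t\in \mathcal{K}_{\alpha_*+\varepsilon}$ on $[0,T(\alpha_*+\varepsilon))$, hence in $\mathcal{D}_{\alpha_*}(L^\Delta)$ by (\ref{zawie}), and differentiating the mild identity in $\mathcal{K}_{\alpha_*}$ completes the verification that $k_t$ is a classical solution of (\ref{rownanielt}).

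For uniqueness, let $v_t$ be the difference of two classical solutions; it satisfies $v_0=0$, $v_t=\int_0^t S(t-s) Bv_s\,ds$, and $M:=\sup_{s\in[0,t_1]}\|v_s\|_{\alpha_*}<\infty$ on every compact $[0,t_1]\subset[0,T(\alpha_*))$. I now exploit that the scale extends \emph{below} $\alpha_*$: pick $\alpha_{\mathrm{low}}<\alpha_*$ with $\alpha_*-\alpha_{\mathrm{low}}>t_1\overline{\varphi}H(\alpha_0)$, subdivide $[\alpha_{\mathrm{low}},\alpha_*]$ into $n$ equal slabs, and iterate the homogeneous identity $n$ times, applying the scale bound at each slab. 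The same induction as above yields
\[
\|v_{t_1}\|_{\alpha_{\mathrm{low}}}\le M\,\frac{t_1^n}{n!}\left(\frac{n\,\overline{\varphi} H(\alpha_0)}{e(\alpha_*-\alpha_{\mathrm{low}})}\right)^n,
\]
which via Stirling tends to $0$ as $n\to\infty$ under our choice of $\alpha_{\mathrm{low}}$. Since $\|\cdot\|_{\alpha_{\mathrm{low}}}=0$ forces a function to vanish identically (the weights $e^{\alpha_{\mathrm{low}}|\eta|}/|\eta|!$ are strictly positive), $v_{t_1}\equiv 0$, and as $t_1$ was arbitrary, uniqueness follows on all of $[0,T(\alpha_*))$.

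The principal obstacle is the Ovcyannikov bookkeeping: at step $n$ the subdivision width $(\alpha_0-\alpha_*)/n$ must be tuned so that the $n^n$ arising from the product of $n$ reciprocal gaps is exactly absorbed by $n!$ via Stirling, while the factor $1/e$ in the pointwise $B$-estimate must cancel the $e^n$ from Stirling — this precise cancellation is what produces the sharp radius $T(\alpha_*)=(\alpha_0-\alpha_*)/[\overline{\varphi}H(\alpha_0)]$ rather than a bound weaker by a factor of $e$. All other ingredients — contractivity of $S(t)$, the pointwise $B$-estimate, the mild-to-classical promotion via (\ref{zawie}), and the scale argument for uniqueness — are either carried out explicitly in the excerpt or are direct consequences of it.
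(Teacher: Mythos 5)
Your proposal follows essentially the same route as the paper: the same splitting $L^\Delta=A+B$, the same contraction multiplication semigroup, the same cross-scale bound $\|Bk\|_{\alpha}\le \overline{\varphi}H(\alpha')\|k\|_{\alpha'}/(e(\alpha'-\alpha))$, and the same $n$-slab Ovcyannikov subdivision producing the factor $\frac{n^n}{n!\,e^n}\bigl(t/T(\alpha_*)\bigr)^n$; your Picard differences coincide with the paper's telescoping terms $k_{t,n}-k_{t,n-1}$ of the Dyson-type series (\ref{drugD}). Your uniqueness argument is in fact marginally cleaner: by letting $\alpha_{\mathrm{low}}$ depend on $t_1$ you reach every $t_1<T(\alpha_*)$ in one stroke, whereas the paper fixes $\tilde{\alpha}$ and must repeat the construction in stages.

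The one point that is wrong as stated is the claim that $(S(t)k)(\eta)=e^{-tE(\eta)}k(\eta)$ defines a $C_0$-semigroup on every $\mathcal{K}_\alpha$ with generator $A$. The norm of $\mathcal{K}_\alpha$ is an essential supremum over $\eta$, and $E(\eta)$ is unbounded on $\Gamma_0$ (it grows like $|\eta|$), so for generic $k$ one has $\|S(t)k-k\|_\alpha \geq \sup_{\eta}(1-e^{-tE(\eta)})\,c(\eta)$, which does not tend to $0$ as $t\to 0^{+}$: strong continuity fails on a fixed space, and $A$ does not generate a $C_0$-semigroup there. This is exactly why the paper defines $\Psi_{\alpha\alpha'}(t)$ as an operator from $\mathcal{K}_{\alpha'}$ into the strictly larger space $\mathcal{K}_{\alpha}$ and obtains continuity only in that cross-scale sense, via $\|\Psi_{\alpha\alpha'}(t)-I_{\alpha\alpha'}\|_{\alpha\alpha'}\le t\|A\|_{\alpha\alpha'}$. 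The defect is repairable --- every use you make of $S(t)$ (contractivity, continuity of the Duhamel integrand, differentiation of the mild identity to recover a classical solution) survives if $S(t)$ is allowed to lose a little of $\alpha$ at each occurrence, as in the paper --- but as written the framing ``mild solution of a bounded perturbation of a $C_0$-semigroup on a fixed space'' is not available, and the equivalence of the mild and classical formulations has to be argued within the scale rather than quoted from semigroup theory.
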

\begin{proof}
We use a modification of the \textit{Ovcyannikov method}, similar to
that used in \cite{fin2}. The estimates obtained above for
$\|Ak\|_{\alpha}$ and $\|Bk\|_{\alpha}$ can also be used to
define the corresponding  bounded linear operators acting from
$\mathcal{K}_{\alpha'}$ to $\mathcal{K}_{\alpha}$, $\alpha'>\alpha$.
Let $\|\cdot\|_{\alpha \alpha'}$ denote the operator norm. By
means of the inequality
\begin{equation}
|\eta|e^{-a|\eta|}\le \frac{1}{ea} \hbox{, for } a>0 \hbox{ and }
\eta \in \Gamma_0, \label{nierow}
\end{equation}
we then get
\begin{equation}
 \|A\|_{\alpha \alpha'} \le \frac{\overline{\beta}}{e(\alpha'- \alpha)}, \qquad  \|B\|_{\alpha \alpha'} \le
\frac{\overline{\varphi}H(\alpha')}{e(\alpha'- \alpha)}.
\label{normyD}
\end{equation}
Next, for $t>0$ and the same $\alpha, \alpha'$, let us define the
operator
\begin{equation}
  \label{MO}
\mathcal{K}_{\alpha'} \ni k \mapsto \Psi_{\alpha \alpha'}(t)k \in
\mathcal{K}_\alpha,
\end{equation}
where
\begin{equation}
  \label{MO1}
(\Psi_{\alpha \alpha'}(t)k )(\eta) = \exp\left( - t E(\eta)\right)
k(\eta).
\end{equation}
Clearly,
\[
\Psi_{\alpha \alpha'}(t) \Psi_{\alpha' \alpha''}(s) = \Psi_{\alpha
\alpha''}(t+s),
\]
holding for all $t, s>0$ and $\alpha'' > \alpha' > \alpha$. Also
\begin{equation}
  \label{Psn}
\|\Psi_{\alpha \alpha'}(t)\|_{\alpha \alpha'} \leq 1 , \qquad t
>0.
\end{equation}
For $t=0$, (\ref{MO}) turns into the embedding operator $I_{\alpha
\alpha'}: \mathcal{K}_{\alpha'} \to \mathcal{K}_\alpha$. Note that
for each $t\geq 0$, the multiplication operator (\ref{MO1}) can be
defined as a bounded operator acting in the same space
$\mathcal{K}_\alpha$. We define as in (\ref{MO}) to secure the
continuity of the map $[0,+\infty) \ni t \mapsto \Psi_{\alpha
\alpha'} k \in \mathcal{K}_\alpha$ for each $k\in
\mathcal{K}_{\alpha'}$. Indeed, by (\ref{normyD}) we get, cf.
(\ref{Psn}),
\[
\|\Psi_{\alpha\alpha'}(t) - I_{\alpha\alpha'}\|_{\alpha\alpha'}
\leq t \| A\|_{\alpha\alpha'} \to 0, \quad {\rm as} \ \ t \to 0.
\]
Let $\alpha_0$ and $\alpha_*$ be as in the statement. For $t<
T(\alpha_*)$, we pick $q>1$ such that also $qt< T(\alpha_*)$. For
this $q$ and some $n\in \mathbb{N}$, we introduce the following
partition of the interval $[\alpha_*, \alpha_0]$:
\begin{equation}
\alpha_{2p} = \alpha_0 - p
\frac{(q-1)(\alpha_0-\alpha_*)}{q(n+1)}-p\frac{\alpha_0-\alpha_*}{qn},
\label{alfy}
\end{equation}
$$\alpha_{2p+1}= \alpha_0 - (p+1)\frac{(q-1)(\alpha_0-\alpha_*)}{q(n+1)}- p\frac{\alpha_0-\alpha_*}{qn},$$
where $p=0,1,2,...,n$.  Note that $\alpha_{2n+1} = \alpha_*$. Let
$$B_{n-p+1}: \mathcal{K}_{\alpha_{2p-1}} \to \mathcal{K}_{\alpha_{2p}} \hbox{,}\quad p=1,2,...,n,$$
act as defined in (\ref{ltDczesci}). Then the norm
$\|B_{n-p+1}\|_{\alpha_{2p}\alpha_{2p-1}}$ can be estimated as in
(\ref{normyD}), which yields, see (\ref{alfy}),
\begin{equation}
\|B_{n-p+1}\|_{\alpha_{2p}\alpha_{2p-1}} \le \frac{qn}{eT(\alpha_*)}.
\label{normB}
\end{equation}
For the some $m\in \mathbb{N}$, we then set
\begin{align}
k_{t,m} & = \Psi_{\alpha_* \alpha_0}(t)k_0\label{drugD}\\
&\quad+\sum_{n=1}^m \int_{0}^{t}\int_{0}^{t_1}\ldots \int_{0}^{t_{n-1}}\Psi_{\alpha_{2n+1} \alpha_{2n}}(t-t_1)B_1\Psi_{\alpha_{2n-1} \alpha_{2n-2}}(t_1-t_2)B_2\ldots \times \notag \\
& \quad \times \Psi_{\alpha_3
\alpha_2}(t_{n-1}-t_n)B_n\Psi_{\alpha_1 \alpha_0}(t_n)k_0dt_n \ldots
dt_1.\notag
\end{align}
By direct calculation we get that
\begin{equation}
\frac{d}{dt}k_{t,m}=Ak_{t,m}+Bk_{t,m-1},
\end{equation}
where $A:\mathcal{D}_{\alpha_{*}}(A) \to \mathcal{K}_{\alpha_{*}}$
and $B: \mathcal{K}_{\alpha_{2m-1}} \to \mathcal{K}_{\alpha_{*}}$.
Note that
\begin{align*}
k_{t,n}-k_{t,n-1}&=\int_{0}^{t}\int_{0}^{t_1}\ldots \int_{0}^{t_{n-1}}\Psi_{\alpha_{2n+1} \alpha_{2n}}(t-t_1)B_1 \Psi_{\alpha_{2n-1} \alpha_{2n-2}}(t_1-t_2)B_2\ldots \times \\
& \quad \times \Psi_{\alpha_3 \alpha_2}(t_{n-1}-t_n)B_n\Psi_{\alpha_1 \alpha_0}(t_n)k_0dt_n \ldots dt_1
\end{align*}
which yields by (\ref{normB})
\begin{equation}
\|k_{t,n}-k_{t,n-1}\|_{\alpha_*} \le \frac{1}{n!}\left(\frac{n}{e}\right)^n \left( \frac{qt}{T(\alpha_*)} \right)^n \|k_0\|_{\alpha_0}.
\label{norrozD}
\end{equation}
The operators under the integrals in (\ref{drugD}) are continuous
(as the products of bounded operators), so $k_{t,n} \in
\mathcal{K}_{\alpha_*}$ is continuous on $t \in [0, T(\alpha_*))$.
In view of (\ref{norrozD}), $\{k_{s,n}\}_{n\in \mathbb{N}_0}$ is a
Cauchy sequence, uniformly in $s \in [0,t]$. Let $k_s \in
\mathcal{K_{\alpha_*}}$ be the limit of this sequence, which then is
a continuous function of $s\in [0, T(\alpha_*))$. By repeating the
above arguments one shows that the same is true in
$\mathcal{K_{\alpha_*+\epsilon}}\hookrightarrow
\mathcal{K_{\alpha_*}}$ for small enough $\epsilon >0$. Hence $k_s
\in \mathcal{D}_{\alpha_*} (L^\Delta)$ for all $s\in [0,
T(\alpha_*))$. Next, by (\ref{normyD}) and (\ref{norrozD}),
$\{dk_{s,n}/ds \}_{n \in \mathbb{N}_0}$ is also a Cauchy sequence
for $s \in [0,t]$ and $dk_{s,n}/ds \to dk_s/ds$ for $n \to \infty$.
Hence, $k_s$ is the classical solution on $[0,T(\alpha_*))$.

Now we show the uniqueness stated. Assume that $u_t$ and $v_t$ are
two solution of (\ref{rownanielt}) with (\ref{ltrD}). Then
$w_t:=u_t-v_t$ satisfies (\ref{rownanielt}) with the zero initial
condition. For each $\tilde{\alpha}<\alpha_*$, the embedding
$I_{\tilde{\alpha}\alpha_*}$ is continuous. Hence $w_t$ solves
(\ref{rownanielt}) also in $\mathcal{K}_{\tilde{\alpha}}$. Then it
can be written in the form
\begin{equation}
  \label{SU}
 w_t = \int_0^t \Psi_{\tilde{\alpha} \alpha} (t-s) B w_s ds,
\end{equation}
for some $\alpha \in (\tilde{\alpha},\alpha_*)$. Here $w_s$ lies
in $\mathcal{K}_{\alpha_*}$ and $B$ acts from
$\mathcal{K}_{\alpha_*}$ to $\mathcal{K}_{\alpha}$. Now for a given
$n>1$, we split $[\tilde{\alpha}, \alpha_{*}]$ similarly as above,
i.e., set $\epsilon = (\alpha_* - \tilde{\alpha})/2n$ and
\[
\alpha_p = \alpha_* - p \epsilon, \qquad p= 0, \dots , 2n.
\]
Then we reiterate (\ref{SU}) $n$ times and obtain
\begin{align*}
w_t &=\int_{0}^{t}\int_{0}^{t_1}\ldots \int_{0}^{t_{n-1}}\Psi_{\alpha_{2n} \alpha_{2n-1}}(t-t_1)B_1\Psi_{\alpha_{2n-2} \alpha_{2n-3}}(t_1-t_2)B_2\ldots \times  \\
& \quad \times \Psi_{\alpha_2 \alpha_1}(t_{n-1}-t_n)B_n w_{t_n}dt_n
\ldots dt_1,
\end{align*}
where $w_{t_n}$ lies in $\mathcal{K}_{\alpha_*}$ and $B_k$ acts from
$\mathcal{K}_{\alpha_{2n - 2k}}$ to $\mathcal{K}_{\alpha_{2n -
2k+1}}$, which yields
\[
\|B_k\|_{\alpha_{2n - 2k+1}\alpha_{2n - 2k}} \leq \frac{ 2 n
\overline{\varphi} H(\alpha_*)}{ e (\alpha_* - \tilde{\alpha})}.
\]
Hence,
\begin{equation*}
\|w_t\|_{\tilde{\alpha}} \leq \frac{1}{n!}\left(\frac{n}{e}
\right)^n \left( 2t \frac{\overline{\varphi}H(\alpha_*)}{\alpha_* -
\tilde{\alpha}} \right)^n \sup_{s \in [0,t]} \|{w_s}\|_{\tilde{\alpha}},
\end{equation*}
where the latter supremum is finite as $w_s$ is continuous. Since
$n$ is arbitrary, this means that $\|{w_t}\|_{\tilde{\alpha}}=0$
for
$$t< \frac{\alpha_* - \tilde{\alpha}}{2{\overline{\varphi}H(\alpha_*)}}.$$
Then also $\|w_t \|_{\alpha_*}=0$. To extend this to the whole
range of $t$ mentioned in the theorem one repeats the above construction due times.
\end{proof}

\noindent \textbf{Acknowledgement.} 
The research for this work was made possible thanks to the support given to the author during her stay 
at Bielefeld University in the framework of the joint Polish-German project No 57154469 “Dynamics of Large 
Systems of Interacting Entities” supported by the DAAD.

\end{document}